\newcommand{\bel}[1]{\begin{equation}\label{#1}}
\newcommand{\be}{\begin{equation}}
\newcommand{\ba}{\begin{eqnarray}}
\newcommand{\ea}{\end{eqnarray}}
\newcommand{\qe}{\end{equation}}
\newcommand{\R}{{\mathbb R}}
\newcommand{\N}{{\mathbb N}}
\newcommand{\Z}{{\mathbb Z}}
\newcommand{\supp}{{\mathrm{supp}}}
\theoremstyle{theorem}
\newtheorem{thm}{Theorem}[section]
\theoremstyle{example}
\newtheorem{example}{Example}[section]
\theoremstyle{corollary}
\newtheorem{coro}{Corollary}[section]
\theoremstyle{lemma}
\newtheorem{lemma}{Lemma}[section]
\theoremstyle{definition}
\theoremstyle{proof}
\theoremstyle{remark}
\newtheorem{rem}{Remark}[section]
\begin{document}

\title{$L^q$ Harmonic Functions on Graphs}

\author{Bobo Hua}
\email{bobohua@mis.mpg.de}
\address{Max Planck Institute for Mathematics in the Sciences\\
04103 Leipzig, Germany.}
\author{J\"urgen Jost}
 \email{jost@mis.mpg.de}
\address{Max Planck Institute for Mathematics in the Sciences\\
04103 Leipzig, Germany.}
\address{Department of Mathematics and Computer Science \\University of
Leipzig \\04109 Leipzig, Germany }

\thanks{The research leading to these results has received funding from the
European Research Council under the European Union's Seventh
Framework Programme (FP7/2007-2013) / ERC grant agreement n$^\circ$
267087.}

\begin{abstract}We prove an analogue of   Yau's Caccioppoli-type
  inequality for nonnegative subharmonic functions on graphs. We then
  obtain a Liouville theorem for harmonic or non-negative subharmonic
  functions of class $L^q$, $1\le q <\infty$, on any graph, and a
  quantitative version for $q>1$. Also, we provide counterexamples for
  Liouville theorems for $0<q<1$.
\end{abstract}
\maketitle

\section{Introduction}
In 1976, Yau \cite{Yau76} proved an $L^q$ ($1<q<\infty$) Liouville
theorem for harmonic functions on complete Riemannian manifolds. Yau's
theorem says that  there doesn't exist any nonconstant $L^q$ ($1<q<\infty$)
harmonic functions on any complete Riemannian manifold $M$. This
result is quite remarkable since it does not require any assumption
besides the -- obviously necessary -- completeness on the underlying manifold.

Karp \cite{Karp82} then found a quantitative version of Yau's $L^q$
Liouville theorem. Let $f$ be a nonconstant nonnegative subharmonic
function on $M.$ Then
\begin{equation}\label{eqi1}\liminf_{R\to\infty}\frac{1}{R^2}\int_{B_R(p)}f^qd\mathrm{vol}=\infty,\end{equation}
where $B_R(p)$ is the geodesic ball centered at $p$ of radius $R$
and $1<q<\infty$. After that, Li-Schoen \cite{LiSchoen84} proved an
$L^q$ mean value inequality for subharmonic functions on manifolds
with proper curvature conditions, which implies the $L^q$ Liouville
theorem for such manifolds.

In 1997, Rigoli-Salvatori-Vignati \cite{RigoliSalvatoriVignati97}
generalized Karp's version of the $L^q$ Liouville theorem to the graph
setting. Under the assumption of uniformly bounded degree for the
graph, they proved the analogue of \eqref{eqi1} (with sums in place of
integrals) for the case $q\geq 2.$ The case $1<q<2$ was left open. In this paper, we use an idea of Yau, the
$L^q$ Caccioppoli-type inequality (see Theorem \ref{Caccioppolli}),
to prove a theorem that resolves that case.

In order to  introduce our setting for graphs, let $G=(V,E)$ be an
infinite, connected, locally finite, weighted graph. Each edge $e\in
E$ carries a positive weight $\mu_e$, and for each vertex $x\in V$
this then induces the positive weight $\mu_x=\sum_{y\sim
x}\mu_{xy},$ called the vertex degree of $x$, where $y\sim x$ means
that they are neighbors. We denote by $B_R(p)$ the closed ball
centered at $p$ of radius $R$ in $G$ where the distance between two
vertices is given by the minimal number of edges to be traversed
when going from one to the other.

The (normalized) Laplace operator is defined as
$$\Delta f(x)=\frac{1}{\mu_x}\sum_{y\sim x}\mu_{xy}\left(f(y)-f(x)\right).$$ A function
$f$ is called harmonic (subharmonic) if $\Delta f=0 (\geq 0).$

We can now formulate our main result.
\begin{thm}\label{MT}
Let $f$ be a nonnegative subharmonic function on the weighted graph
$G.$ Then, either $f$ is constant or, for any $q\in (1,\infty),$
\begin{equation}\label{meq1}
\liminf_{R\to \infty}
\frac{1}{R^2}\sum_{B_R(p)}f^{q}(x)\mu_x=\infty.
\end{equation}
\end{thm}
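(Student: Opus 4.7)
The plan is to argue by contradiction using the Caccioppoli-type inequality of Theorem \ref{Caccioppolli}. Assume $f$ is nonconstant and suppose, for contradiction, that for some $q \in (1,\infty)$ one has $A := \liminf_{R \to \infty} R^{-2} \sum_{B_R(p)} f^q(x) \mu_x < \infty$. First I would extract a sequence $R_i \uparrow \infty$ along which $R_i^{-2} \sum_{B_{R_i}(p)} f^q \mu_x \leq 2A$ for all large $i$.

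The key technical choice is a family of radial cutoffs $\phi_i : V \to [0,1]$, equal to $1$ on $B_{R_i/2}(p)$, vanishing off $B_{R_i}(p)$, and interpolating linearly in $d(\cdot,p)$ on the intervening annulus, so that $|\phi_i(x) - \phi_i(y)| \leq 2/R_i$ for every edge $x \sim y$. Applying Theorem \ref{Caccioppolli} to $f$ with test function $\phi_i$ produces an estimate of the form
\begin{equation*}
\sum_V \phi_i^2(x) \, (\nabla f)^2(x) \, f^{q-2}(x) \, \mu_x \leq \frac{C(q)}{R_i^2} \sum_{B_{R_i}(p)} f^q(x) \mu_x \leq 2 C(q) A,
\end{equation*}
uniformly in $i$. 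Sending $i \to \infty$ and invoking monotone convergence on the left yields the global bound $\sum_V (\nabla f)^2(x) f^{q-2}(x) \mu_x \leq 2 C(q) A < \infty$.

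The final step is to upgrade this finiteness to the contradiction that $f$ must be constant. Following the strategy of Yau and Karp, I would reapply Theorem \ref{Caccioppolli} with cutoffs concentrated on annuli far from $p$, now exploiting the fact that the tail of the convergent series $\sum_V (\nabla f)^2 f^{q-2} \mu_x$ is arbitrarily small, to force the entire global sum to vanish. Since $G$ is connected and $q > 1$, vanishing of this weighted gradient sum implies $f$ is constant, contradicting our assumption. The main obstacle lies precisely here: in the continuous Riemannian setting or for $q \geq 2$ on graphs, one iterates via the subharmonicity of $f^{q/2}$, but this device is unavailable for $1 < q < 2$, which is exactly the range left open by \cite{RigoliSalvatoriVignati97}. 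Resolving this range is where the strength of Theorem \ref{Caccioppolli}, which holds uniformly for all $q > 1$, becomes essential in closing the argument without appealing to the subharmonicity of $f^{q/2}$.
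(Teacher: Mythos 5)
Your first step is fine: applying Theorem \ref{Caccioppolli} with $r=R_i/2$, $R=R_i$ along a subsequence where $R_i^{-2}\sum_{B_{R_i}}f^q\mu_x\leq 2A$ does give the uniform bound and hence, by monotone convergence, the global finiteness $\sum_{e=xy}\mu_{e}|\nabla_e f|^2\min\{f^{q-2}(x),f^{q-2}(y)\}<\infty$. The genuine gap is in your final step, which is exactly the heart of the matter. You propose to ``reapply Theorem \ref{Caccioppolli} with cutoffs concentrated on annuli far from $p$'' and to exploit the smallness of the tail of the convergent Dirichlet series. But the statement of Theorem \ref{Caccioppolli} does not let you do this: its right-hand side is $\frac{C}{(R-r)^2}\sum_{B_R\setminus B_r}f^q\mu_x$, i.e.\ the $f^q$-mass of the annulus, and under the standing hypothesis this is only $O(1)$ (bounded by $CK$ along the good radii), never small --- the Dirichlet tail simply does not appear on the right of \eqref{Caccio}. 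To make your idea work one has to go back \emph{inside} the proof of the Caccioppoli inequality, to the identities \eqref{eq11} and \eqref{eq3}, and apply H\"older's inequality to the cross term so that it splits into (annulus Dirichlet sum)$^{1/2}\times$(annulus $f^q$-sum$/(R-r)^2$)$^{1/2}$; only then does the small factor you want actually occur. This is what the paper does: with $R_{i+1}\geq 2R_i$ and the quantities $Q_i$, $A_i$ (and $\beta_i$ for $q\geq 2$) it derives the self-improving inequality $Q_{i+1}^2\leq CK(Q_{i+1}-Q_i+\beta_i)$, deduces boundedness of $Q_i$, and then telescopes ($\sum_i Q_{i+1}^2\leq (CK)^2$ in the case $1<q\leq 2$) to force $Q_i\to 0$. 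None of this iteration is present in your sketch, and without it the passage from ``finite'' to ``zero'' is unproved; note also that for $q\geq 2$ an extra boundary term (the $\beta_i$ in the paper) must be controlled separately.

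A second, smaller gap: even once one knows $|\nabla_e f|\min\{f^{q-2}(x),f^{q-2}(y)\}=0$ on every edge, the conclusion ``$f$ is constant'' is not immediate for $q\geq 2$, because the weight $\min\{f^{q-2}(x),f^{q-2}(y)\}$ vanishes wherever $f$ does, so an edge with $f(x)=0<f(y)$ is not excluded by this identity alone. The paper closes this by the argument of Corollary \ref{coro1}, using the subharmonicity of $f$ at $y$ to derive a contradiction at such an edge; you should include that step (for $1<q<2$ the weight is positive off the zero set and the conclusion is easier, modulo the $0\cdot\infty$ convention handled by replacing $f$ with $f+\epsilon$).
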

Note that in our theorem, in contrast to
\cite{RigoliSalvatoriVignati97}, we do not need to assume any
uniform upper and lower bounds of the vertex degree of the graph.
This is consistent with the intuition that nonconstant harmonic
functions on infinite graphs of finite volume grow extremely fast.
We will give an example to show that \eqref{meq1} is false for the
case $q=1,$ see Remark \ref{remq}.

It is well-known that for graphs with $\mu_x\geq \mu_0>0$ for all
$x\in V,$ the $L^q$ ($0<q<\infty$) Liouville theorem is an easy
consequence of the maximum principle for subharmonic functions, see
Theorem \ref{Mxp}. Thus, the only interesting case of the $L^q$
Liouville theorem concerns general weighted graphs. In the case of
Riemannian manifolds, soon after Yau's $L^q$ ($1<q<\infty$)
Liouville theorem, Chung \cite{Chung83} provided an example to show
that there is no $L^1$ Liouville theorem. Counterexamples for an
$L^q$ ($0<q\leq 1$) Liouville theorem were then given in Li-Schoen
\cite{LiSchoen84}. That is why Li-Schoen \cite{LiSchoen84} proved
the $L^q$ ($0<q<\infty$) Liouville theorem under an additional
curvature assumptions. Surprisingly, we can adopt an idea of Li
\cite{Li84,Li12} to prove the $L^q$ ($1\leq q<\infty$) Liouville
theorem, even for the borderline case $q=1$. Although the curvature
assumptions are necessary in the Riemannian case by Li \cite{Li84},
on graphs we don't need any such curvature-like assumptions. There
are other generalizations of the $L^q$ Liouville theorem, for
instance, by Sturm \cite{Sturm94} to strongly local regular
Dirichlet forms (not including graphs, $q\neq1$) and Masamune
\cite{Masamune09} to graphs with different weights ($2\leq q\in\N$).
We can show
\begin{thm}\label{Lqharm}
For any graph $G,$ there doesn't exist any nonconstant $L^q$
harmonic (nonnegative subharmonic) function for $q\in
[1,\infty)$.\end{thm}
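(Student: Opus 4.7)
For $q \in (1,\infty)$ the plan is to invoke Theorem \ref{MT} directly. If $f \geq 0$ is subharmonic with $f \in L^q(G,\mu)$, then $\sum_{B_R(p)} f^q(x)\mu_x \le \|f\|_q^q < \infty$, so $R^{-2}\sum_{B_R(p)} f^q(x)\mu_x \to 0$ as $R\to\infty$, contradicting the conclusion of Theorem \ref{MT} unless $f$ is constant. For a signed harmonic function $f \in L^q$, I would first verify that $|f|$ is subharmonic by a case-split on the sign of $f(x)$ (using $|f(y)| \ge \pm f(y)$), reducing to the previous case to get $|f|$ constant; then connectedness together with the harmonic mean-value identity forces $f$ itself to be constant.

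The genuine content is the $q = 1$ case. Let $f \geq 0$ be subharmonic with $\sum_V f\mu < \infty$. The plan has three steps. First, I would boost $f$ to being harmonic: the $L^1$ hypothesis yields
\[
\sum_{x\in V}\sum_{y\sim x} \mu_{xy}|f(y)-f(x)| \le \sum_{x\in V}\sum_{y\sim x} \mu_{xy}(f(y)+f(x)) = 2\sum_{x\in V} f(x)\mu_x < \infty,
\]
so Fubini lets me rearrange the double sum along edges, and since each edge $\{x,y\}$ contributes $\mu_{xy}(f(y)-f(x)) + \mu_{xy}(f(x)-f(y)) = 0$, I obtain $\sum_V \mu_x \Delta f(x) = 0$. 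Combined with $\Delta f \geq 0$, this forces $\Delta f \equiv 0$.

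Second, I would observe that the truncation $g_N := (f-N)_+$ is still nonnegative subharmonic and in $L^1$ for every $N \ge 0$: a case analysis at vertices where $f(x) > N$ and where $f(x) \le N$, using $g_N(y) \ge f(y) - N$, gives $\Delta g_N \ge 0$, while $g_N \le f \in L^1$. Hence Step 1 applies to $g_N$ and shows $g_N$ is harmonic. Third, suppose for contradiction that $f$ is nonconstant, so $\inf_V f < \sup_V f$. Pick any $N$ with $\inf_V f < N < \sup_V f$; then $g_N$ is a nonnegative harmonic function vanishing at some $x_0$ (with $f(x_0) \le N$) and strictly positive at some $y_0$ (with $f(y_0) > N$). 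But $g_N(x_0) = 0$ together with $\Delta g_N(x_0) = 0$ and $g_N \ge 0$ forces $g_N \equiv 0$ on all neighbors of $x_0$; iterating and using connectedness of $G$ gives $g_N \equiv 0$, contradicting $g_N(y_0) > 0$.

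The main obstacle is the $q=1$ case, since Theorem \ref{MT} cannot be applied directly: an $L^1$ bound makes $R^{-2}\sum_{B_R} f\mu \to 0$ rather than $\infty$, so the growth dichotomy of Theorem \ref{MT} is vacuous at the endpoint. The key insight needed is that the class of nonnegative subharmonic $L^1$ functions is stable under the truncation $f \mapsto (f-N)_+$; once the Fubini argument of Step 1 upgrades subharmonicity to harmonicity, the strong minimum principle applied to each $g_N$ propagates vanishing across the whole graph and rules out nonconstancy.
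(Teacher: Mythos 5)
Your argument is correct, and for the crucial endpoint $q=1$ it follows the same Li-type strategy as the paper, with two small but genuine variations worth noting. Your Step 1 is exactly the paper's computation $\|Pf\|_1=\|f\|_1$ in disguise: the edge-pairing/Fubini argument (with the absolute convergence bound $2\|f\|_1$ justifying the rearrangement, and symmetry of $\mu_{xy}$) gives $\sum_x \mu_x\Delta f(x)=0$, which with $\Delta f\geq 0$ forces harmonicity. Where you diverge is the truncation and the endgame: the paper uses $g=\min\{f,a\}$, which is \emph{superharmonic}, so it must rerun the norm-conservation argument in the superharmonic direction ($Pg\leq g$ with $\|Pg\|_1=\|g\|_1$), and then derives a contradiction by the pointwise strict inequality $f(x)=g(x)=Pg(x)<Pf(x)=f(x)$ at an edge where $f$ increases; you instead use $g_N=(f-N)_+$, which is again nonnegative \emph{subharmonic} and $\leq f$, so Step 1 applies verbatim, and you conclude via propagation of zeros of the nonnegative harmonic function $g_N$ (strong minimum principle plus connectedness). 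Your route is slightly more economical in that the conservation lemma is proved once and reused, and the endgame is a standard minimum-principle propagation rather than the paper's bespoke comparison; the paper's choice of $\min\{f,a\}$ keeps the auxiliary function bounded, but that is not needed. For $q\in(1,\infty)$ you invoke Theorem \ref{MT} (an $L^q$ bound makes $R^{-2}\sum_{B_R}f^q\mu_x\to 0$), whereas the paper cites Corollary \ref{coro1}; both rest on the Caccioppoli inequality of Theorem \ref{Caccioppolli}, so there is no circularity, and your reduction of the signed harmonic case via subharmonicity of $|f|$ (which applies equally at $q=1$, a point you should state explicitly there as well) matches the paper's.
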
 We give examples of a large class of graphs
with infinite volume that don't satisfy the $L^q$ Liouville
theorem for any $0<q<1,$ see Example \ref{expp1}.

As an application, we study the $L^q$ Liouville theorem for higher
order elliptic operators on graphs where the maximum principle is
no longer available.

The organization of the paper is as follows: The basic facts on
graphs are collected in Sect.~\ref{sec2}, the next section contains
the proof of Yau's Caccioppoli-type inequality and the solution to
the problem of Rigoli-Salvatori-Vignati
\cite{RigoliSalvatoriVignati97}, and the last section is devoted to
the
$L^1$ Liouville theorem on graphs and an application to higher order
operators.

\section{Preliminaries}\label{sec2}
Let $G=(V,E)$ be an infinite, connected, locally finite, weighted
graph (see e.g. \cite{Chung97,Grigoryan09} for definitions).
$G=(V,E)$ is a weighted graph with positive and symmetric edge
weights $\mu_{xy}>0$ for any $xy\in E.$  For convenience, we extend
the edge weight to $V\times V$ by $\mu_{xy}=0$ for $xy\not \in E.$
The graph $G$ may have self-loops, i.e. $xx\in E$ (or $\mu_{xx}>0$)
but w.l.o.g. we exclude multiple edges because they can be  implicitly
encoded in the edge weights
$\mu_{xy}.$ Define the measure on $V$ as $\mu_x=\sum_y\mu_{xy}$ for
$x\in V.$ Let us denote by $\mu(G):=\sum_x\mu_x$ the total volume of
the graph $G.$ There are many interesting graphs of finite volume.
The Laplace operator on $G$ is defined as
$$\Delta f(x)=\sum_{y}\frac{\mu_{xy}}{\mu_x}\left(f(y)-f(x)\right).$$ The transition operator associated
to the random walk on the graph is defined as
$Pf(x)=\sum_{y}P(x,y)f(y)$ where $P(x,y)=\frac{\mu_{xy}}{\mu_x}.$
Obiviously, $\Delta=P-I$ where $I$ is the identity operator.

There is a natural (combinatorial) distance function $d$ on the
graph, simply counting the minimal number of edges separating two vertices. We denote by $B_R(p):=\{x\in V: d(x,p)\leq R\}$ the closed
ball centered at $p$ of radius $R.$ For any subset $\Omega\subset
G,$ we denote by $d(x,\Omega):=\min\{d(x,y):y\in \Omega\}$ the
distance to $\Omega,$ by $\partial \Omega:=\{y\in G:d(x,\Omega)=1\}$
the boundary of $\Omega.$ A function $f:\Omega\cup\partial
\Omega\rightarrow \R$ is called \emph{harmonic (subharmonic,
superharmonic)} on $\Omega$ if $\Delta f(x)=0$ ($\geq 0,\leq 0$) for
all $x\in\Omega.$ For any function $f$ on $G$ we denote the $L^q$
norm of $f$ by $\|f\|_q:=(\sum_x |f(x)|^q\mu_x)^{1/q},$
$q\in(0,\infty).$

For our difference operators, we need orientations. We choose an
orientation for the edge set $E,$ that is, $e=xy$ means that the
edge $e$ starts at $x$ and ends at $y.$ Let us denote by $\R^V:=\{f:
V\rightarrow\R\}$ (resp. $\R^E$) the set of all functions on $V$
(resp. on $E$), by $C_c(V)$ (resp. $C_c(E)$) the space of compact
supported functions defined on $V$ (resp. on $E$). We define inner
products on $C_c(V)$
and $C_c(E)$ as \begin{eqnarray*} \langle f,g\rangle&=&\sum_{x\in V} f(x)g(x)\mu_x,\\
\langle u,v\rangle&=&\sum_{e\in E} u(e)v(e)\mu_e,
\end{eqnarray*} where $f,g\in C_c(V)$ and $u,v\in C_c(E).$ For any
$f\in \R^V,$ the pointwise gradient $\nabla f\in \R^E$ is defined as
$\nabla f(e)=\nabla_ef:= f(y)-f(x)$ for all $e=xy\in E$. A very
useful formula reads as, for any $e=xy,$
$$\nabla_e(fg)=f(x)\nabla_eg+\nabla_ef g(y).$$ In addition, we have
Green's formula (see e.g. \cite{Grigoryan09}), for $f\in \R^V$ and
$g\in C_c(V),$
\begin{equation*}
\langle\Delta f, g\rangle =-\langle\nabla f, \nabla g\rangle.
\end{equation*}
In the following, we mean by $e\subset A$ for some subset
$A\subset V$ that both vertices of the edge $e$ are contained in $A$.

The graph is called \emph{non-degenerate} if $\mu_x\geq \mu_0>0$ for
all $x\in V.$ As a well-known result, we will see that there are no nontrivial $L^q$ nonnegative subharmonic functions on
non-degenerate graphs for any $q\in (0,\infty).$ This follows from
the maximum principle for subharmonic functions and the uniform lower
bound of the measure of a non-degenerate graph.
\begin{lemma}[Maximum principle]
Let $\Omega$ be a finite connected subset of $G$ and $f$ be
subharmonic on $\Omega.$ Then
\begin{equation}
\max_{\Omega}{f}\leq\max_{\partial\Omega}f,
\end{equation} where the equality holds iff $f$ is constant on $\Omega\cup\partial \Omega$.
\end{lemma}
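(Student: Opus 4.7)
The plan is to prove this by a strong maximum principle propagation argument: I would show that if the subharmonic function $f$ attains its maximum over $\Omega \cup \partial\Omega$ at any interior vertex, then $f$ is in fact constant on all of $\Omega \cup \partial\Omega$. Both the inequality $\max_\Omega f \leq \max_{\partial\Omega} f$ and its equality case fall out of this statement.

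The local step is immediate from the definition of subharmonicity. Suppose $M := \max_{\Omega\cup\partial\Omega} f$ is attained at some $x \in \Omega$. Then $\Delta f(x) \geq 0$ reads
$$\sum_{y\sim x} \frac{\mu_{xy}}{\mu_x}\bigl(f(y)-f(x)\bigr) \geq 0.$$
Each summand is non-positive, since $f(y)\leq M = f(x)$, while the coefficients $\mu_{xy}/\mu_x$ are strictly positive. Hence every term must vanish, giving $f(y)=M$ for every neighbor $y$ of $x$, whether $y$ lies in $\Omega$ or in $\partial\Omega$.

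Next I would propagate this along paths. Given any $z\in\Omega$, the connectedness of $\Omega$ yields a path $x=x_0, x_1, \ldots, x_k = z$ with every $x_i \in \Omega$. Applying the local step successively at $x_0, x_1, \ldots, x_{k-1}$, each of which is an interior vertex where subharmonicity may be used, forces $f(x_i)=M$ for all $i$. Thus $f\equiv M$ on $\Omega$. Finally, any $y \in \partial \Omega$ is adjacent to some $z \in \Omega$, and invoking the local step at $z$ gives $f(y)=M$. So $f\equiv M$ on $\Omega\cup\partial\Omega$.

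The two assertions now follow easily. If $\max_\Omega f > \max_{\partial\Omega}f$ held, then the global maximum $M=\max_\Omega f$ would be attained at an interior vertex and the propagation just described would make $f$ constant equal to $M$, contradicting the strictly smaller boundary value. This proves $\max_\Omega f \leq \max_{\partial\Omega} f$. In the equality case $\max_\Omega f = \max_{\partial\Omega}f = M$, the value $M$ is attained at some interior vertex (by the definition of $\max_\Omega f$), so the same propagation gives $f\equiv M$ on $\Omega\cup\partial\Omega$; the converse is trivial. The only point requiring some care is that the local step is valid only at interior vertices, which is why boundary vertices must be reached as neighbors of interior ones rather than by further propagation along $\partial\Omega$.
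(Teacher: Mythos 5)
Your proof is correct and is exactly the argument the paper has in mind: the paper simply remarks that the lemma "follows from the definition of subharmonic functions and the connectedness of $\Omega$" (citing Grigoryan), and your local-step-plus-propagation argument is the standard way to fill in that sketch. No gaps; the point you flag about only applying subharmonicity at interior vertices is handled correctly.
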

\begin{proof}
This follows from the definition of subharmonic functions and the
connectedness of $\Omega$ (see \cite{Grigoryan09}).
\end{proof}

\begin{thm}\label{Mxp}
Let $G$ be a non-degenerate graph. Then there doesn't exist any
nontrivial $L^q$ nonnegative subharmonic functions on $G$,
$q\in(0,\infty)$.
\end{thm}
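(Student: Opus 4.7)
The plan is to derive pointwise decay of $f$ at infinity from the $L^q$ and non-degeneracy hypotheses, and then compare $f$ to its boundary values on exhausting balls via the maximum principle.

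First I would exploit the uniform measure lower bound. Assume $f\ge 0$ is subharmonic with $\|f\|_q<\infty$. Since $\mu_x\ge \mu_0>0$ for every $x\in V$,
\[
\mu_0\sum_{x\in V}f(x)^q\;\le\;\sum_{x\in V}f(x)^q\mu_x\;=\;\|f\|_q^q\;<\;\infty.
\]
Hence the sum $\sum_x f(x)^q$ converges, and in particular, for any fixed base point $p$, $f(x)\to 0$ as $d(p,x)\to\infty$. Fixing $p$ and setting $M_R:=\max_{y\in\partial B_R(p)} f(y)$, this decay yields $M_R\to 0$ as $R\to\infty$.

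Next I would apply the maximum principle to $f$ on $\Omega=B_R(p)$. Because $G$ is locally finite, $B_R(p)$ is a finite set of vertices; because $G$ is connected, any $x\in B_R(p)$ is joined to $p$ by a path of length $d(x,p)\le R$ contained in $B_R(p)$, so $B_R(p)$ is connected. Since $G$ is infinite and $B_R(p)$ is finite, its complement is nonempty, so $\partial B_R(p)\ne\emptyset$ for every $R$. The lemma then gives
\[
f(p)\;\le\;\max_{B_R(p)} f\;\le\;\max_{\partial B_R(p)} f\;=\;M_R.
\]

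Letting $R\to\infty$, the right-hand side tends to $0$, so $f(p)\le 0$. Combined with $f\ge 0$ this forces $f(p)=0$, and since $p\in V$ was arbitrary, $f\equiv 0$. I do not expect any serious obstacle; the only point that needs to be checked carefully is that $\partial B_R(p)$ is nonempty (so that the maximum principle produces a nontrivial bound), which is immediate from infiniteness, connectedness, and local finiteness of $G$.
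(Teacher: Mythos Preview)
Your proposal is correct and follows essentially the same approach as the paper: use the non-degeneracy $\mu_x\ge\mu_0$ together with $f\in L^q$ to get $f(x)\to 0$ at infinity, then apply the maximum principle on exhausting balls to force $f\equiv 0$. The paper's proof is a terse three-line version of exactly this argument; you have simply filled in the details (finiteness and connectedness of $B_R(p)$, nonemptiness of $\partial B_R(p)$) that the paper leaves implicit.
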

\begin{proof}
Let $f$ be an $L^q$ nonnegative subharmonic function. Since $G$ is a
non-degenerate graph and $f\in L^q$, we have $f(x)\to 0$ as
$x\to\infty.$ By the maximum principle, this yields that $|f|\leq
\epsilon$ for any $\epsilon>0.$ That is, $f\equiv0.$
\end{proof}

If $G$ is not non-degenerate, we cannot apply the maximum principle
as above. In fact, there are $L^q$ harmonic functions on some
 graphs which are unbounded at infinity.

\section{$L^q$ subharmonic and harmonic functions}
In this section, we generalize Yau's $L^q$ Caccioppoli-type
inequality on manifolds (see Lemma 7.1 in \cite{Li12}) to graphs. This
will imply the $L^q$ Liouville theorem for harmonic functions
when $q\in (1,\infty)$. Moreover, using this estimate, we shall prove
the main Theorem \ref{MT}.

\begin{thm}[Caccioppoli-type inequality]\label{Caccioppolli}
Let $f$ be a nonnegative subharmonic function on the weighted graph
$G.$ Then for any $1<q<\infty,$ $0<r<R-1,$ $r,R\in \N$
\begin{equation}\label{Caccio}
\sum_{e=xy\subset B_r}\mu_{xy}|\nabla_{xy}
f|^2\min\{f^{q-2}(x),f^{q-2}(y)\}\leq
\frac{C}{(R-r)^2}\sum_{B_{R}\setminus B_r}f^{q}(x)\mu_x.
\end{equation}
\end{thm}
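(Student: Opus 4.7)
The plan is the standard Caccioppoli test-function argument, adapted to the discrete setting. I would first introduce a Lipschitz cutoff $\phi\colon V\to[0,1]$, for instance $\phi(x)=\max\{0,\min\{1,(R-d(x,p))/(R-r)\}\}$, which equals $1$ on $B_r$, vanishes outside $B_R$, satisfies $|\nabla_e\phi|\le (R-r)^{-1}$ on every edge, and has $\nabla\phi$ supported on edges meeting $B_R\setminus B_r$. Since $\phi^2 f^{q-1}\in C_c(V)$ is nonnegative and $\Delta f\ge 0$, Green's formula yields
\[
\sum_e\mu_e\,\nabla_e f\,\nabla_e(\phi^2 f^{q-1})=-\langle\Delta f,\phi^2 f^{q-1}\rangle\le 0.
\]
The key manipulation is the symmetric Leibniz identity
\[
\nabla_e(\phi^2 f^{q-1})=\tfrac{\phi^2(x)+\phi^2(y)}{2}\nabla_e(f^{q-1})+\tfrac{f^{q-1}(x)+f^{q-1}(y)}{2}\nabla_e(\phi^2),
\]
so that after multiplying by $\nabla_e f$ the first summand is a nonnegative ``main term.'' By the integral mean-value identity $(f(y)-f(x))(f^{q-1}(y)-f^{q-1}(x))=(q-1)I(f(x),f(y))|\nabla_e f|^2$ with $I(a,b):=\int_0^1[(1-t)a+tb]^{q-2}dt$, and the estimate $I\ge\min\{a^{q-2},b^{q-2}\}$ (monotonicity of $t\mapsto t^{q-2}$), the main term pointwise dominates $(q-1)\tfrac{\phi^2(x)+\phi^2(y)}{2}\min\{f^{q-2}(x),f^{q-2}(y)\}|\nabla_e f|^2$.

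The crux is a weighted AM--GM absorption of the cross term $\tfrac{f^{q-1}(x)+f^{q-1}(y)}{2}\nabla_e f\,\nabla_e(\phi^2)$ into the main term; this is where the main obstacle lies. A naive absorption against $\min\{f^{q-2}\}$ would be singular on edges where $f$ vanishes at one endpoint, so instead I absorb against the averaged factor $I$ itself, which is strictly positive as soon as at least one endpoint value is. Using $|\nabla_e(\phi^2)|\le(\phi(x)+\phi(y))|\nabla_e\phi|$ and $(\phi(x)+\phi(y))^2\le 2(\phi^2(x)+\phi^2(y))$, I pair $\sqrt{(q-1)I\cdot(\phi^2(x)+\phi^2(y))/2}\,|\nabla_e f|$ against the remaining factor and apply $2ab\le\epsilon a^2+\epsilon^{-1}b^2$ with a well-chosen $\epsilon$ of order one, absorbing half the main term back. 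Restricting on the left to $e\subset B_r$ (where $\phi\equiv 1$) and using $I\ge\min\{f^{q-2}\}$ yields
\[
\sum_{e\subset B_r}\mu_e\min\{f^{q-2}(x),f^{q-2}(y)\}|\nabla_e f|^2\le\frac{C}{(q-1)^2}\sum_e\mu_e|\nabla_e\phi|^2\,\frac{(f^{q-1}(x)+f^{q-1}(y))^2}{I(f(x),f(y))}.
\]

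To finish, I would establish the elementary pointwise inequality
\[
\frac{(a^{q-1}+b^{q-1})^2}{I(a,b)}\le C_q(a^q+b^q)\qquad(a,b\ge 0,\ q>1),
\]
by homogenizing (set $\rho=\min(a,b)/\max(a,b)\in[0,1]$) and checking that the resulting one-variable quotient is continuous on $[0,1]$, with finite limits $q-1$ at $\rho=0$ and $2$ at $\rho=1$. Combined with $|\nabla_e\phi|^2\le (R-r)^{-2}$, the support property of $\nabla\phi$, and the edge--vertex identity $\sum_{e=xy}\mu_e(f^q(x)+f^q(y))=\sum_x f^q(x)\mu_x$, this localizes the right-hand side to a sum over $B_R\setminus B_r$ and produces the stated inequality with $C$ depending only on $q$. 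The single delicate point in the whole argument is the substitution of $I$ for $\min\{f^{q-2}\}$ at the absorption step; it is precisely what makes the proof valid uniformly for all $q\in(1,\infty)$, including the range $1<q<2$ left open by Rigoli--Salvatori--Vignati.
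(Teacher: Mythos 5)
Your proposal is correct, but it takes a genuinely different route from the paper's proof. The paper fixes an orientation with $f(y)\ge f(x)$ on each edge, expands via the asymmetric product rule $\nabla_e(fg)=f(x)\nabla_e g+\nabla_e f\,g(y)$, and then splits into two cases, using the one-sided convexity estimates $\nabla_e(f^{q-1})\ge (q-1)\nabla_e f\,f^{q-2}(x)$ for $q\ge2$ and $\nabla_e(f^{q-1})\ge (q-1)\nabla_e f\,f^{q-2}(y)$ for $1<q\le2$, after which Young's inequality absorbs the cross terms (and an extra term of the form $\sum_e\mu_e\nabla_e f\,|\nabla_e\varphi|^2 f^{q-1}(x)$ must be estimated separately). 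You instead symmetrize the Leibniz rule, write $\nabla_e f\,\nabla_e(f^{q-1})=(q-1)I(f(x),f(y))|\nabla_e f|^2$ with $I(a,b)=\int_0^1[(1-t)a+tb]^{q-2}\,dt$, absorb the cross term against $I$ itself (positive unless both endpoint values vanish, in which case the edge contributes nothing), and close with the homogeneous inequality $(a^{q-1}+b^{q-1})^2/I(a,b)\le C_q(a^q+b^q)$; your verification of the latter is sound, since the normalized quotient in $\rho=\min(a,b)/\max(a,b)$ is continuous on $[0,1]$ with boundary values $q-1$ and $2$, and the lower bound $I\ge\min\{f^{q-2}(x),f^{q-2}(y)\}$ holds in both monotonicity regimes of $t\mapsto t^{q-2}$. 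What your route buys is a single uniform argument for all $q\in(1,\infty)$, no orientation choice, no case split, and no need for the $0\cdot\infty$ convention or the $f+\epsilon$ regularization in the range $1<q<2$; what it does not reproduce are the intermediate inequalities \eqref{eq11} and \eqref{eq3}, which the paper reuses verbatim as the starting point of the proof of Theorem \ref{MT}, so one would have to redo (or adapt) your absorption there. One cosmetic point: with your cutoff, which equals $1$ exactly on $B_r$, edges with $\nabla_e\phi\neq0$ may have one endpoint on the sphere $d(\cdot,p)=r$, so your right-hand side localizes to $\{r\le d(\cdot,p)\le R\}$ rather than to $B_R\setminus B_r$; taking the cutoff $\equiv1$ on $B_{r+1}$ as in the paper (slope $1/(R-r-1)\le 2/(R-r)$) yields the statement exactly.
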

\begin{rem}
The classical Caccioppoli inequality is the case $q=2.$ On graphs,
see e.g. \cite{CoulhonGrigoryan98,HolopainenSoardi97,LinXi10}.
\end{rem}
\begin{rem}
We take the convention that $0\cdot\infty=0$ on the LHS of
\eqref{Caccio} for the case $1<q<2$ because it suffices to consider
positive subharmonic functions by setting $f_{\epsilon}=f+\epsilon$
($\epsilon\to 0$).
\end{rem}
\begin{proof}
Fix a point $p\in G$ and denote the distance function to $p$ by
$r(x)=d(x,p).$ We denote by $B_r:=B_r(p)$ the closed ball centered
at $p$ of radius $r$. Let us choose a test function $\varphi$
satisfying
\[\varphi(x)=\left\{\begin{array}{ll}
1,& r(x)\leq r+1,\\
\frac{R-r(x)}{R-r-1},& r+1\leq r(x)\leq R,\\
  0, &R\leq r(x).
\end{array}\right.\]
Then $\supp \varphi\subset B_{R},$ $\nabla_{e}\varphi\neq 0$ only if
$e\subset B_R\setminus B_r,$ and $|\nabla_e\varphi|\leq
\frac{2}{R-r}$ for all $e\in E.$

For the fixed subharmonic function $f,$ we choose a particular
orientation of $E$ such that for any $e\in E,$ $e=xy$ where
$f(y)\geq f(x).$ We divide the proof into two cases.

\emph{Case 1. } $2\leq q <\infty$. Using $\varphi^2 f^{q-1}$ as
the test function, we have
\begin{eqnarray}\label{eq11}
0&\leq& \langle\Delta f, \varphi^2 f^{q-1}\rangle=-\langle\nabla f, \nabla (\varphi^2f^{q-1})\rangle\nonumber\\
&=& -\sum_{e\in E} \mu_e \nabla_e
f\nabla_e(\varphi^2f^{q-1})\nonumber\\
&=&-\sum_{e=xy\in E}\mu_e \nabla_e f\left[\varphi^2(x)\nabla_e
(f^{q-1})+\nabla_e\varphi(\varphi(x)+\varphi(y))f^{q-1}(y)
\right] \nonumber\\
&&\ (\mathrm{using}\ f^{q-1}(y)=\nabla_{xy}
(f^{q-1})+f^{q-1}(x))\nonumber\\
&=&-\sum_e \mu_e\nabla_e f\nabla_e
(f^{q-1})\varphi^2(y)-\sum_e\mu_e\nabla_e f\nabla_e
\varphi(\varphi(x)+\varphi(y))f^{q-1}(x)\nonumber\\
&&\ (\mathrm{by\ the\ mean\ value\ inequality\ }\nabla_e
(f^{q-1})\geq (q-1)\nabla_e
ff^{q-2}(x))\nonumber\\
&\leq&-(q-1)\sum_e \mu_e|\nabla_e f|^2
f^{q-2}(x)\varphi^2(y)-\sum_e\mu_e\nabla_e f\nabla_e
\varphi(\varphi(x)+\varphi(y))f^{q-1}(x)\nonumber\\
&&\\
&=& -(q-1)\sum_e\mu_e|\nabla_e f|^2
f^{q-2}(x)\varphi^2(y)-2\sum_e\mu_e\nabla_e
f\nabla_e\varphi\varphi(y)f^{q-1}(x)
\nonumber\\&&+\sum_e\mu_e\nabla_e
f|\nabla_e\varphi|^2f^{q-1}(x).\nonumber
\end{eqnarray}
Using Young's inequality for the second term in the last inequality,
$$2\nabla_e
f|\nabla_e\varphi|\varphi(y)f^{q-1}(x)\leq \frac{q-1}{2}|\nabla_e
f|^2 f^{q-2}(x)\varphi^2(y)+C |\nabla_e\varphi|^2f^{q}(x),
$$ we obtain that
\begin{eqnarray}\label{eq12}
0&\leq & -(q-1)/2\sum_e\mu_e|\nabla_e f|^2
f^{q-2}(x)\varphi^2(y)+C\sum_e\mu_e
|\nabla_e\varphi|^2f^{q}(x)\nonumber\\&&+\sum_e\mu_e\nabla_e
f|\nabla_e\varphi|^2f^{q-1}(x)\nonumber\\
&=&I+II+III.
\end{eqnarray}
By the choice of $\varphi$ we have
$$II\leq \frac{C}{(R-r)^2}\sum_{e\subset B_{R}\setminus B_r}\mu_e f^{q}(x)\leq \frac{C}{(R-r)^2}\sum_{x\in B_{R}\setminus B_r}\mu_x
f^{q}(x),$$ and
\begin{eqnarray*} III&\leq&\frac{C}{(R-r)^2}\sum_{e\subset B_{R}\setminus
B_r}\mu_e (f(y)-f(x))f^{q-1}(x)\leq \frac{C}{(R-r)^2}\sum_{e\subset
B_{R}\setminus B_r}\mu_e f(y)f^{q-1}(x)\\
&\leq & \frac{C}{(R-r)^2}\sum_{e\subset B_{R}\setminus B_r}\mu_e
f^{q}(y)\leq \frac{C}{(R-r)^2}\sum_{x\in B_{R}\setminus B_r} \mu_x
f^{q}(x).
\end{eqnarray*}
Hence by \eqref{eq12}, \begin{eqnarray*}\sum_{e=xy\subset
B_r}\mu_{e}|\nabla_e f|^2\min\{f^{q-2}(x),f^{q-2}(y)\}&\leq&
\sum_e\mu_e|\nabla_e f|^2 f^{q-2}(x)\varphi^2(y)\\ &\leq&
\frac{C}{(R-r)^2}\sum_{B_{R}\setminus
B_r}\mu_xf^{q}(x).\end{eqnarray*}

 \emph{Case 2.}  $1< q \leq 2.$ The only difference here is that
 by the mean value inequality
 $\nabla_{xy} (f^{q-1})\geq (q-1)\nabla_{xy} f f^{q-2}(y).$
By a similar calculation as in  Case 1, we have
\begin{eqnarray}\label{eq3}
0&\leq& \langle\Delta f, \varphi^2 f^{q-1}\rangle= -\sum_{e\in E}
\mu_e \nabla_e
f\nabla_e(\varphi^2f^{q-1})\nonumber\\
&=&-\sum_{e=xy\in E}\mu_e \nabla_e f\left[\nabla_e
(f^{q-1})\varphi^2(y)+f^{q-1}(x)\nabla_e\varphi(\varphi(x)+\varphi(y))
\right] \nonumber\\
&=&-\sum_e \mu_e\nabla_e f\left[\nabla_e
(f^{q-1})\varphi^2(x)+f^{q-1}(y)\nabla_e
\varphi(\varphi(x)+\varphi(y))\right]\nonumber\\
&\leq& -(q-1)\sum_e\mu_e|\nabla_e f|^2
f^{q-2}(y)\varphi^2(x)-2\sum_e\mu_e\nabla_e
f\nabla_e\varphi\varphi(x)f^{q-1}(y)
\nonumber\\&&-\sum_e\mu_e\nabla_e
f|\nabla_e\varphi|^2f^{q-1}(y)\nonumber\\
&\leq& -(q-1)\sum_e\mu_e|\nabla_e f|^2
f^{q-2}(y)\varphi^2(x)-2\sum_e\mu_e\nabla_e
f\nabla_e\varphi\varphi(x)f^{q-1}(y).
\end{eqnarray} Using Young's inequality for the second term and the
same argument as before, we
have
$$\sum_e\mu_e|\nabla_e f|^2 f^{q-2}(y)\varphi^2(x)\leq
\frac{C}{(R-r)^2}\sum_{B_{R}\setminus B_r}\mu_xf^{q}(x).$$ This
proves
$$\sum_{e=xy\subset B_r}\mu_{e}|\nabla_e
f|^2\min\{f^{q-2}(x),f^{q-2}(y)\}\leq
\frac{C}{(R-r)^2}\sum_{B_{R}\setminus B_r}\mu_xf^{q}(x)$$
which is \eqref{Caccio}.

\end{proof}

By this Caccioppoli-type inequality, we can prove the $L^q$
Liouville theorem for nonnegative subharmonic functions when $q\in
(1,\infty).$

\begin{coro}\label{coro1}
For any graph $G,$ there doesn't exist any nonconstant $L^q$
harmonic (nonnegative subharmonic) function for $q\in(1,\infty).$
\end{coro}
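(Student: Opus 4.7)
The plan is to apply the Caccioppoli-type inequality of Theorem~\ref{Caccioppolli} at a fixed inner radius $r$ with outer radius $R \to \infty$, then combine the resulting identity with subharmonicity and connectedness of $G$. First I would reduce the harmonic case to the nonnegative subharmonic case: if $f$ is $L^q$ harmonic, then $|f|$ is $L^q$ and nonnegative subharmonic (apply the triangle inequality to $\Delta f(x) = 0$). Once $|f|$ is shown to be constant, harmonicity of $f$ together with connectedness of $G$ forces $f$ itself to be constant (if both signs occurred, an adjacent pair with $f(x) = c,\ f(y) = -c$ would give $\Delta f(x) < 0$). So it suffices to prove the corollary for $f$ nonnegative subharmonic with $f \in L^q$.

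For such $f$ and any $0 < r < R - 1$, Theorem~\ref{Caccioppolli} yields
\[
\sum_{e=xy \subset B_r} \mu_{xy} |\nabla_{xy} f|^2 \min\{f^{q-2}(x), f^{q-2}(y)\} \le \frac{C}{(R-r)^2} \sum_{B_R \setminus B_r} f^q(x) \mu_x \le \frac{C \|f\|_q^q}{(R-r)^2}.
\]
Since $\|f\|_q < \infty$, the right-hand side tends to $0$ as $R \to \infty$, and since $r$ is arbitrary the inequality forces the left-hand side, summed over all edges of $G$, to vanish. Each summand is nonnegative under the convention $0 \cdot \infty = 0$ used in the $1 < q < 2$ regime, so in fact every summand equals zero.

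Extracting constancy from this is the main task. On any edge $xy$ with $f(x), f(y) > 0$ the weight $\min\{f^{q-2}(x), f^{q-2}(y)\}$ is a positive real, forcing $|\nabla_{xy} f| = 0$; hence $f$ is constant on every connected component of the subgraph induced on $\{f > 0\}$. When $1 < q < 2$, finiteness of the Caccioppoli right-hand side additionally forbids edges with exactly one endpoint in $\{f = 0\}$ (such an edge would yield a $+\infty$ summand), so $\{f > 0\}$ and $\{f = 0\}$ are graph-separated and connectedness of $G$ makes one of them empty. The genuine obstacle lies in the regime $q \ge 2$, where the Caccioppoli weight degenerates on boundary edges and does not constrain them; here I would invoke subharmonicity: if some component $A \subseteq \{f > 0\}$ had $f \equiv c > 0$ and $A \ne V$, connectedness would produce an edge from $A$ to $V \setminus A$ whose outer endpoint must satisfy $f = 0$, and then $\Delta f$ at the inner endpoint comes out strictly negative, contradicting subharmonicity. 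In every case $f$ must be constant.
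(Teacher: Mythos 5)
Your proof is correct and follows essentially the same route as the paper: apply the Caccioppoli inequality with the outer radius tending to infinity to conclude $|\nabla_e f|^2\min\{f^{q-2}(x),f^{q-2}(y)\}=0$ on every edge, then split into the cases $1<q<2$ (the weight is positive, forcing $\nabla_e f=0$) and $q\ge 2$ (subharmonicity at a vertex of $\{f>0\}$ adjacent to the zero set gives a strict inequality, a contradiction), which is exactly the paper's case analysis, and your reduction from harmonic $f$ to $|f|$ matches the paper's. One small slip worth fixing: for $1<q<2$ an edge $xy$ with $f(x)=0<f(y)$ does \emph{not} produce a $+\infty$ summand --- since $q-2<0$ one has $\min\{f^{q-2}(x),f^{q-2}(y)\}=f^{q-2}(y)<\infty$, so the summand equals $\mu_{xy}f^{q}(y)$, which is finite but strictly positive; as you have already shown every summand vanishes, such an edge is still excluded, so the conclusion stands with this corrected justification.
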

\begin{proof}
For any harmonic function $f,$ $|f|$ is subharmonic. Hence, it
suffices to prove the corollary for nonnegative subharmonic
functions. Suppose $f$ is an $L^q$ nonnegative subharmonic function
on $G.$ We apply Theorem \ref{Caccioppolli} by setting $R=2r.$ Since
the RHS of \eqref{Caccio} tends to zero as $r\to \infty,$ we have
for any $e=xy\in E$
\begin{equation}\label{eq1}|\nabla_e
f|\min\{f^{q-2}(x),f^{q-2}(y)\}=0.\end{equation}

Now we claim that $|\nabla_e f|=0$ for any $e\in E.$ Suppose not,
then there exists an $e\in E$ such that $|\nabla_e f|\neq0,$
w.l.o.g., we may assume $e=xy$ and $f(y)>f(x).$ By the equation
\eqref{eq1}, $\min\{f^{q-2}(x),f^{q-2}(y)\}=0.$ For $1<q<2,$
$\min\{f^{q-2}(x),f^{q-2}(y)\}>0$ which yields a contradiction. For
$2\leq q<\infty,$ we have $f(x)=0<f(y).$ For any $z\sim y$ and
$z\neq x,$ using the equation \eqref{eq1} for $yz,$ we have $f(z)=0$
or $f(z)=f(y).$ Noting that $f(x)<f(y),$ the subharmonicity of $f$ at
$y$ implies that
$$f(y)\leq \sum_w \frac{\mu_{yw}}{\mu_y}f(w)<f(y).$$ This is a
contradiction which proves the claim. Then $f$ is constant.
\end{proof}

Now we prove the main Theorem \ref{MT} which settles a question in
\cite{RigoliSalvatoriVignati97}.

\begin{proof}[Proof of Theorem \ref{MT}]
We divide the proof into two cases, $1<q\leq2$ and $2\leq q<\infty$.
In the following, we assume $\liminf_{R\to \infty}
\frac{1}{R^2}\sum_{B_R}f^{q}(x)\mu_x<\infty$ and show that $f$ is
constant. For any $r,R\in\N,$ $r+1<R,$ we define the test function
as
\[\varphi(x)=\varphi_{r,R}(x)=\left\{\begin{array}{ll}
1,& r(x)\leq r+1,\\
\frac{R-r(x)}{R-r-1},& r+1\leq r(x)\leq R,\\
  0, &R\leq r(x).
\end{array}\right.\]
Then $\nabla_{e}\varphi\neq 0$ only if $e\subset B_R\setminus B_r,$
and $|\nabla_e\varphi|\leq \frac{2}{R-r}$ for all $e\in E.$ In
addition, $\varphi(x)\leq 2\varphi(y)$ for any $e=xy\not\subset
B_R\setminus B_{R-2}.$

\emph{Case 1. } $1<q \leq 2$. Using the test function
$\varphi_{r,R}$ in \eqref{eq3}, we have
\begin{equation*}
\sum_{e\subset B_R}\mu_e|\nabla_e f|^2 f^{q-2}(y)\varphi^2(x)\leq
C(q)\sum_{e\subset B_R\setminus B_r}\mu_e\nabla_e
f|\nabla_e\varphi|\varphi(x)f^{q-1}(y).
\end{equation*}
Hence by the H\"older inequality,
\begin{eqnarray}\label{eq4}
&&\left(\sum_{e\subset B_R}\mu_e|\nabla_e f|^2
f^{q-2}(y)\varphi^2(x)\right)^2\nonumber\\&\leq&
C\left(\sum_{e\subset B_R\setminus B_r}\mu_e|\nabla_e f|^2
f^{q-2}(y)\varphi^2(x)\right)\left(\sum_{e\subset B_R\setminus
B_r}\mu_e|\nabla_e \varphi|^2 f^{q}(y)\right)\nonumber\\
&\leq& C\left((\sum_{e\subset B_R}-\sum_{e\subset B_r})\
\mu_e|\nabla_e f|^2
f^{q-2}(y)\varphi^2(x)\right)\left(\frac{C}{(R-r)^2}\sum_{x\in
B_R\setminus B_r}f^{q}(x)\mu_x\right).\nonumber\\
&&
\end{eqnarray}
Since we assume $\liminf_{R\to \infty}
\frac{1}{R^2}\sum_{B_R}f^{q}(x)\mu_x<\infty,$ there exists a
sequence $\{R_i\}_{i=1}^{\infty},$ $R_i\to \infty,$ such that
$R_{i+1}\geq 2R_i$ and
$\frac{1}{R_i^2}\sum_{B_{R_i}}f^{q}(x)\mu_x\leq K<\infty$ for all
$i\in\N.$ We define \begin{eqnarray}\label{eq21}
A_i&:=&\frac{1}{R_i^2}\sum_{B_{R_i}}f^{q}(x)\mu_x,\nonumber\\
\varphi_i&:=& \varphi_{R_i,R_{i+1}}(x),\\
Q_{i+1}&:=&\sum_{e\subset B_{R_{i+1}}}\mu_e|\nabla_e f|^2
f^{q-2}(y)\varphi_i^2(x).\nonumber
\end{eqnarray}
Now by setting $R=R_{i+1}$ and $r=R_i,$ the inequality \eqref{eq4}
reads as
\begin{eqnarray}\label{eq5}
Q_{i+1}^2&\leq&
C(Q_{i+1}-Q_i)\frac{R_{i+1}^2}{(R_{i+1}-R_i)^2}A_{i+1}\nonumber\\
&\leq& C(Q_{i+1}-Q_i) A_{i+1} \ \ (\ \mathrm{by}\ R_{i+1}\geq 2R_i).
\end{eqnarray}
Since $A_i\leq K$ for any $i\in \N,$
$$Q_{i+1}^2\leq CQ_{i+1}K.$$ This implies that $Q_{i+1}\leq CK$ for
all $i.$ On the other hand, \eqref{eq5} implies that
$$Q_{i+1}^2\leq CK(Q_{i+1}-Q_i).$$ Summing over $i$ in the above
inequality we have, for any integer $N$,
$$\sum_{i=1}^N Q_{i+1}^2\leq CK (Q_{N+1}-Q_1)\leq (CK)^2.$$
Hence $Q_i\to 0$ as $i\to\infty.$

\emph{Case 2. } $2\leq q <\infty$. This argument follows from the
idea of \cite{RigoliSalvatoriVignati97}. Using the test function
$\varphi_{r,R}$ in \eqref{eq11}, we have
\begin{eqnarray*}
&&\sum_{e\subset B_R}\mu_e|\nabla_e f|^2 f^{q-2}(x)\varphi^2(y)\leq
C(q)\sum_{e\subset B_R\setminus B_r}\mu_e\nabla_e
f|\nabla_e\varphi|(\varphi(x)+\varphi(y))f^{q-1}(x)\\
&& \ \ (\ \mathrm{by}\ \varphi(x)\leq 2\varphi(y)\ \mathrm{for}\
e=xy\not\subset B_R\setminus B_{R-2})\\
&\leq& 2C\sum_{e\subset B_R\setminus B_r}\mu_e\nabla_e
f|\nabla_e\varphi|\varphi(y)f^{q-1}(x)+C\sum_{e\subset B_R\setminus
B_{R-2}}\mu_e\nabla_e f|\nabla_e\varphi|^2f^{q-1}(x).
\end{eqnarray*} By the H\"older inequality, we have
\begin{eqnarray}\label{eq31}
&&\left(\sum_{e\subset B_R}\mu_e|\nabla_e f|^2
f^{q-2}(x)\varphi^2(y)\right)^2 \nonumber\\ &\leq&
C\left(\sum_{e\subset B_R\setminus B_r}\mu_e|\nabla_e f|^2
f^{q-2}(x)\varphi^2(y)\right)\left(\sum_{e\subset B_R\setminus
B_r}\mu_e f^{q}(x)|\nabla_e\varphi|^2\right)\nonumber\\
&&+ C\left(\sum_{e\subset B_R\setminus B_{R-2}}\mu_e|\nabla_e f|^2
f^{q-2}(x)|\nabla_e\varphi|^2\right)\left(\sum_{e\subset
B_R\setminus B_{R-2}}\mu_e f^{q}(x)|\nabla_e\varphi|^2\right)\nonumber\\
&\leq& C\left(\sum_{e\subset B_R\setminus B_r}\mu_e|\nabla_e f|^2
f^{q-2}(x)\varphi^2(y)+\frac{C}{(R-r)^2}\sum_{e\subset B_R\setminus
B_{R-2}}\mu_e |\nabla_e
f|^2f^{q-2}(x)\right)\nonumber\\
&&\times\left(\frac{C}{(R-r)^2}\sum_{x\in B_R\setminus B_r}\mu_x
f^{q}(x)\right)
\end{eqnarray}
We keep  $A_i$ and $\varphi_i$ as in
\eqref{eq21} and define two more quantities,
\begin{eqnarray*}
Q_{i+1}&:=&\sum_{e\subset B_{R_{i+1}}}\mu_e|\nabla_e f|^2
f^{q-2}(x)\varphi_i^2(y),\\
\beta_i&:=&\frac{C}{(R_{i+1}-R_{i})^2}\sum_{e\subset
B_{R_{i+1}}\setminus B_{{R_{i+1}}-2}}\mu_e |\nabla_e f|^2f^{q-2}(x).
\end{eqnarray*}
With these notations, the
inequality \eqref{eq31} reads as, by $R_{i+1}\geq 2R_i$,
\begin{equation}\label{eq41}
Q_{i+1}^2\leq CA_{i+1}(Q_{i+1}-Q_i+\beta_i)\leq
CK(Q_{i+1}-Q_i+\beta_i).
\end{equation} On the other hand, noting that $\nabla_e f\leq f(y),$
\begin{eqnarray*}\beta_i&\leq&\frac{C}{(R_{i+1}-R_{i})^2}\sum_{e\subset
B_{R_{i+1}}\setminus B_{{R_{i+1}}-2}}
\mu_ef^{q}(y)\\
&\leq& \frac{C}{(R_{i+1}-R_i)^2}\sum_{x\in
B_{R_{i+1}}\setminus B_{{R_{i+1}}-2}}f^{q}(x)\mu_x \ \ (\ \mathrm{by}\ R_{i+1}\geq 2R_i)\\
&\leq& CA_{i+1}\leq CK.
\end{eqnarray*}
Hence \eqref{eq41} implies that
$$Q_{i+1}^2\leq CK(Q_{i+1}+CK).$$ This implies that
that $Q_i$ is bounded, i.e. $Q_{i+1}\leq C(K).$ Hence $Q_i\leq Q_{i+1}\uparrow Q<\infty.$ By the definition of
$\beta_i$ and $Q_{i+1},$ we have
$$\beta_i\leq \frac{C}{(R_{i+1}-R_i)^2}Q\leq \frac{CQ}{R_i^2}\to 0,\ \ \ (i\to\infty).$$
Taking the limit $i\to \infty$ in \eqref{eq41}, we obtain $Q=0.$
This is the estimate we need for the Case 2.

In both cases, $Q_i\to 0$ as $i\to\infty.$ Since $\varphi_i(x)\equiv
1$ for $x\in B_{R_i},$ for any $R>0$ and sufficiently large $R_i\gg
R$ we have
$$\sum_{e\subset B_{R}}\mu_e|\nabla_e f|^2 \min\{f^{q-2}(x),f^{q-2}(y)\}\leq
Q_{i+1}\to 0, \ \ (i\to\infty).$$ Hence for each $e=xy\in E,$ we
have $|\nabla_e f|\min\{f^{q-2}(x),f^{q-2}(y)\}=0.$ A similar
argument as in Corollary \ref{coro1} shows that $f$ is constant.
\end{proof}

\begin{rem}
For the case $1<q\leq 2,$ we are now in the situation of
\eqref{eq4}. We may thus obtain the precise analogues of Theorem 2.2 in Karp
\cite{Karp82} and Theorem 1 ($b$) in Sturm \cite{Sturm94}. This is unknown for the case $q>2.$
\end{rem}

\section{Borderline case and counterexamples}
In this section, we deal with the borderline case, i.e. $q=1.$ We
adopt an idea of Li \cite{Li84,Li12} to prove the $L^1$ Liouville
theorem for nonnegative subharmonic functions. In our setting, we
don't need any curvature-like assumptions. We shall now complete the
proof of Theorem
\ref{Lqharm} by settling the $L^1$-case.

\begin{proof}[Proof of Theorem \ref{Lqharm}]
For the case $1<q<\infty,$ see Corollary \ref{coro1}. We only need
to prove the theorem for $q=1.$ Let $f$ be a nonnegative $L^1$
subharmonic function. We claim that $f$ is harmonic. The
subharmonicity of $f$ implies that $\Delta f=(P-I)f\geq 0,$ i.e. for
all $x\in G,$
\begin{equation}\label{eq2}Pf(x)\geq f(x),\end{equation} where $P$
is the transition operator. Since $f\in L^1(G)$ and $f\geq 0,$
\begin{eqnarray*}
\|Pf\|_1&=&\sum_xPf(x)\mu_x=\sum_{x,y}p(x,y)f(y)\mu_x\\
&=&\sum_{x,y}\mu_{xy}f(y)=\sum_{x,y}p(y,x)f(y)\mu_y\\
&=&\sum_y\left(\sum_xp(y,x)\right)f(y)\mu_y=\|f\|_1.
\end{eqnarray*} Hence by the monotonicity of \eqref{eq2}, we have $Pf(x)=f(x)$ for all
$x\in G.$ This proves the claim.

For any $a>0,$ we define a function $g:=\min\{f,a\}.$ Since $f$ is
harmonic, a straightforward calculation shows that $g$ is
superharmonic, i.e. $\Delta g=(P-I)g\leq 0.$ It is easy to see that
$0\leq g\in L^1(G)$ (by $g\leq f$). A similar computation as before
yields that $\|Pg\|_1=\|g\|_1.$ The monotonicity of $Pg\leq g$
implies that $g$ is harmonic, i.e. $Pg=g.$ Since $a$ is arbitrary,
we can prove that $f$ is constant. Suppose that $f$ is not constant,
then there exists $x,y\in G$ such that $x\sim y$ and $f(x)\neq
f(y).$ Without loss of generality, we may assume $f(x)<f(y).$ Now
choose $a=f(x).$ By the harmonicity of $f$ and $g,$
$$f(x)=g(x)=Pg(x)<Pf(x)=f(x).$$ A contradiction. This proves the
theorem.
\end{proof}

Now we give two examples to show that the $L^q$ Liouville theorem is
not true for $q\in(0,1).$ The first example is a graph of finite
volume and the second of infinite volume.
\begin{example}
Let $G=(V,E)$ be an infinite line, i.e. $V=\Z$ and $xy\in E$ iff
$|x-y|=1$ for $x,y\in \Z$. We define the edge weight as
$\mu_{xy}=2^{1-\max\{|x|,|y|\}}$ for $xy\in E.$ Obviously,
$\mu(G)<\infty.$ The function $f$ defined as
\[f(n)=\left\{\begin{array}{ll}
2^n-1,& n\geq 0,\\
1-2^{-n},& n<0,
\end{array}\right.
\] is a harmonic function on $G.$ Noting that $f(n)=-f(-n),$ we have
for any $q\in (0,1)$
$$\|f\|_q^q=2\sum_{n=1}^{\infty}f(n)^q(2^{-n+1}+2^{-n})\leq C\sum_{n=1}^{\infty}2^{(q-1)n}<\infty.$$
\end{example}
\begin{example}\label{expp1}
Let $\Gamma_1=(V_1,E_1)$ be any infinite graph with
$\mu(\Gamma_1)=\infty.$ Fix a vertex in $\Gamma_1,$ say $p\in V_1.$
Let $G=(V,E)$ be the weighted graph in the previous example and
$\underline{0}$ the vertex representing the origin of $\Z$. We
define a new graph $\Gamma=\Gamma_1\wedge G$ by gluing the vertices
$p$ and $\underline{0}$ (i.e. identifying $p$ with $\underline{0}$).
Formally, $\Gamma=(V_{\Gamma},E_{\Gamma})$ where
$V_{\Gamma}=(V_1\setminus\{p\})\cup V$ and $xy\in E_{\Gamma}$ iff
$xy\in E_1$ for $x,y\in V_1\setminus\{p\},$ or $xy\in E$ for $x,y\in
V,$ or $y=\underline{0}$ and $xp\in E_1,$ or $x=y=\underline{0}$ and
$pp\in E_1.$ Since no new edges is added, we take the edge weights
on $\Gamma$ to be those of $\Gamma_1$ and $G$. Then
\[g(x)=\left\{\begin{array}{ll}
0,& x\in V_1\setminus\{p\},\\
f(x),& x\in V,
\end{array}\right.
\] is a nonconstant $L^q$ harmonic functions on $\Gamma$ for $q\in(0,1).$
\end{example}
\begin{rem}\label{remq}
In both examples above, direct calculation shows that
$\sum_{B_R}|f(x)|\mu_x=O(R).$ This means that \eqref{meq1} fails for
$q=1.$
\end{rem}

Finally, as an application of Theorem
\ref{Lqharm}, we study $L^q$ ($1\leq q<\infty$) Liouville theorems
for solutions to higher order operators on graphs. Let
$\Delta^m:=\Delta\circ\Delta\circ\cdots\circ \Delta$ be the $m$-fold
composition of Laplace operators, i.e., the analogue of the polyharmonic operator in the continuous setting. A function $f\in
\R^V$ is therefore called \emph{polyharmonic} if $\Delta^m f=0$ for
some $m \ge 2.$ The maximum principle is not available
for polyharmonic functions. By our Theorem \ref{Lqharm}, we can
prove the $L^q$ ($1\leq q<\infty$) Liouville theorem for
polyharmonic functions on graphs of infinite volume.

\begin{thm}
Let $G$ be a graph of infinite volume, i.e. $\mu(G)=\infty$. If $f$
is an $L^q$ polyharmonic function on $G$ for $1\leq q<\infty,$ then
$f\equiv 0.$
\end{thm}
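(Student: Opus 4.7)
The plan is to prove this by induction on $m$, using the fact that $\Delta$ maps $L^q$ to itself for $q \geq 1$ and then invoking Theorem \ref{Lqharm} at each step.

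First I would verify that the transition operator $P$ is a contraction on $L^q$ for $q \in [1,\infty)$. By Jensen's inequality applied to the probability measure $P(x,\cdot)$,
\[
|Pf(x)|^q \leq \sum_y P(x,y)|f(y)|^q,
\]
and then summing against $\mu_x$ and using the reversibility identity $\mu_x P(x,y) = \mu_{xy} = \mu_y P(y,x)$, which gives $\sum_x \mu_x P(x,y) = \mu_y$, one obtains $\|Pf\|_q \leq \|f\|_q$. In particular, $\|\Delta f\|_q = \|(P-I)f\|_q \leq 2\|f\|_q$, so $\Delta$ maps $L^q$ into $L^q$. By iteration, $\Delta^k f \in L^q$ for every $k \geq 0$ whenever $f \in L^q$.

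Now I would argue by induction on $m$. For the base case $m=1$, Theorem \ref{Lqharm} gives that any $L^q$ harmonic function is constant; since $\mu(G) = \infty$, the only constant in $L^q$ is $0$, so $f \equiv 0$. For the inductive step, assume the conclusion holds for order $m-1$, and let $f \in L^q$ satisfy $\Delta^m f = 0$. Set $g := \Delta^{m-1} f$. By the $L^q$-boundedness of $\Delta$ established above, $g \in L^q$, and by construction $\Delta g = \Delta^m f = 0$, so $g$ is harmonic. Applying the base case to $g$ yields $g \equiv 0$, i.e. $\Delta^{m-1}f = 0$. The induction hypothesis then forces $f \equiv 0$.

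There is no real obstacle here; the only substantive point is the $L^q$-contractivity of $P$, which requires $q \geq 1$ (so that Jensen's inequality is available) and which is also precisely why the argument fails for $0 < q < 1$, consistent with the counterexamples in Example \ref{expp1}. The infinite-volume hypothesis is used only to rule out nonzero constants in the base case; without it, nonzero constants would give trivial counterexamples.
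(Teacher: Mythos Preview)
Your proof is correct and follows essentially the same route as the paper: both use Jensen's inequality to establish that $\Delta$ is bounded on $L^q$, then peel off one Laplacian at a time by applying Theorem \ref{Lqharm} to $\Delta^{m-1}f$ and using the infinite-volume hypothesis to kill the resulting constant. Your version just spells out the $L^q$-contractivity of $P$ more explicitly and phrases the iteration as a formal induction rather than the paper's ``iteratively, we get $f\equiv 0$''.
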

\begin{proof}
Suppose $\Delta^m f=0$ for some $m\in\N.$ We put $g^i:=\Delta^if$
for $1\leq i\leq m.$ Direct calculation (by Jensen's inequality)
shows that the normalized Laplace operator is a bounded operator on
$L^q$ for any $q\in [1,\infty).$ Hence, we know that $g^i\in L^q$ by
$f\in L^q.$ Since $\Delta g^{m-1}=0$ and $g^{m-1}\in L^q,$ by
Theorem \ref{Lqharm}, $g^{m-1}$ is constant. Since
$\mu(G)=\infty,$ $g^{m-1}\equiv 0.$ Iteratively, we get $f\equiv 0.$
\end{proof}

\bibliography{Lqharmonic}
\bibliographystyle{alpha}

\end{document}